\newtheorem{theorem}{Theorem}[section]
\newtheorem{proposition}[theorem]{Proposition}
\theoremstyle{remark}
\newtheorem{remark}[theorem]{Remark}
\DeclareMathOperator{\Ric}{Ric}
\address{Graduate School of Mathematical Sciences, The University of Tokyo, Komaba, Tokyo, 153-8914, Japan} 
\email{yasuakifujitani@g.ecc.u-tokyo.ac.jp}
\newcommand{\e}{\mathrm{e}}
\newcommand{\Hess}{\mathrm{Hess}\,}
\renewcommand{\d}{\mathrm{d}}
\title{Choi-Wang inequality for affine connections}
\author{Yasuaki Fujitani}
\keywords{Affine connection, Weighted Ricci curvature, Substatic condition}
\subjclass[2020]{53B05, 53A15, 53C21}
\begin{document}
\maketitle
\begin{abstract}
    Choi-Wang established a lower bound for the first \textcolor{black}{non-zero} eigenvalue of the Laplacian on minimal hypersurfaces in manifolds with positive Ricci curvature.
    We extend this Choi-Wang type inequality to the setting of positive Ricci curvature with respect to the Li-Xia type affine connection.
\end{abstract}
\section{Introduction}
Let $(M,g)$ be an $n$-dimensional Riemannian manifold.
For $u \in C^{\infty}(M)$,
$\alpha,\beta \in \mathbb{R}$ and the Levi-Civita connection $\nabla$ for $g$,
Li-Xia \cite{LX} introduced the following affine connection:
\begin{align}\label{eq:LX}
    \nabla^{u,\alpha,\beta}_XY = \nabla_XY + \alpha \d {u} (X)Y + \alpha \d {u}(Y) X + \beta g(X,Y)\nabla {u}.
\end{align}
We refer to this as the \textit{Li-Xia type affine connection}.
We call the Ricci curvature with respect to $\nabla^{u,\alpha,\beta}$ by the \textit{Li-Xia type affine Ricci curvature} and denote it by $\Ric^D$ with $D := \nabla^{u,\alpha,\beta}$.
In \cite{LX},
they gave a Lichnerowicz type inequality for manifolds satisfying
\begin{align}\label{eq:LX-condition}
    \Ric^D \geq K \e^{(\alpha- \beta)u}g
\end{align}
with $K > 0$.
In this paper,
we also work under this condition, 
which serves as a bridge in relating various geometric frameworks.
Indeed, 
the Li-Xia type affine Ricci curvature provides a unified perspective that connects geometry of manifolds with the substatic condition and geometry of manifolds with $1$-weighted Ricci curvature bounded from below.

To explore this connection in more detail,
we begin with providing a brief introduction of the substatic condition.
For positive $V \in C^{\infty}(M)$,
the substatic condition is given by
\begin{align*}
    V \Ric - \Hess V + (\Delta V) g \geq 0.
\end{align*}
For instance, 
the deSitter-Schwarzschild manifolds and the Reissner-Nordstr\"{o}m manifolds satisfy this condition.
In general relativity,
this condition is related to the Null energy condition.
Recently,
Brendle \cite{brendle2013constant} derived an Alexandrov type theorem for a class of warped product manifolds,
which satisfies the substatic condition.
Furthermore,
Borghini-Fogagnolo \cite{BF} obtained a Bishop-Gromov type volume comparison theorem for manifolds with the substatic condition.
Regarding the relation with $\nabla^{u,\alpha,\beta}$,
in the case $\alpha = 0$ and $\beta = 1$,
Li-Xia type affine Ricci curvature is equivalent to the static Ricci tensor:
\begin{align*}
    \Ric - \frac{\Hess V}{V} + \frac{\Delta V}{V} g
\end{align*}
with $V = \e^u$.
Hence,
we see that the condition of non-negative static Ricci tensor is identical to the substatic condition.
The curvature bound \eqref{eq:LX-condition} coincides with the condition:
\begin{align*}
    V \Ric - \Hess V + (\Delta V)g \geq K g.
\end{align*}

We proceed to introduce the $1$-weighted Ricci curvature.  
We consider a weighted Riemannian manifold $(M,g,\e^{-f}v_g)$,
where $f \in C^{\infty}(M)$ and $v_g$ is the Riemannian volume measure.
For $N \in (-\infty, 1] \cup [n, \infty]$,
the \textit{$N$-weighted Ricci curvature} is defined by
\begin{align*}
    \Ric_f^N = \Ric + \Hess f - \frac{\d f\otimes \d f}{N-n}.
\end{align*}
The last term vanishes when $N = \infty$ and we consider only a constant function as $f$ when $N = n$.
Moreover,
for $N \in (n,\infty)$ and $N' \in (-\infty,1)$,
we have the relation:
\begin{align*}
    \Ric_f^N  \leq \Ric_f^\infty \leq \Ric_f^{N'} \leq \Ric_f^1.
\end{align*}
Therefore,
we can consider the condition $\Ric_f^1\geq K g$ to be the weakest among the conditions $\Ric_f^N \geq K g$ for various $N \in (-\infty,1]\cup [n,\infty]$.
In comparison geometry,
research on manifolds with Ricci curvature bounded from below has been extended to weighted Riemannian manifolds with $N$-weighted Ricci curvature bounded from below.
For example,
Wei-Wylie \cite{WW} derived a Bishop-Gromov type volume comparison theorem in the case $N = \infty$.
Later,
Wylie-Yeroshkin generalized it to the case $N = 1$.
We note that any Bishop-Gromov type volume comparison theorem has not yet been obtained under the condition \eqref{eq:LX-condition}.
In particular, 
for weighted Riemannian manifolds,  
the curvature assumption in \eqref{eq:LX-condition} is equivalent to
\begin{align*}
    \Ric_f^1 \geq K\e^{\frac{-f}{n-1}}g,
\end{align*}
and this is different from the setting considered by Wylie-Yeroshkin~\cite{WY}.
The case $N = 1$ also naturally arises in the context of affine connections.  
Indeed, 
when $\alpha = (n-1)^{-1}$, 
$\beta = 0$ and $u \equiv -f$, 
the Li-Xia type affine connection reduces to the one introduced earlier by Wylie-Yeroshkin \cite{WY}, 
where they revealed that the Ricci curvature with respect to this connection is equivalent to the $1$-weighted Ricci curvature.

We next turn to the Choi-Wang inequality.
Choi-Wang \cite{CW} was the first to establish a lower bound for the first \textcolor{black}{non-zero} eigenvalue of the Laplacian on minimal hypersurfaces in an orientable closed Riemannian manifold with positive Ricci curvature.
Their proof relied on the first Betti number estimate obtained by Bochner \cite{bochner1948curvature}.
Subsequently,
Choi-Schoen \cite{CS} extended the Choi-Wang type inequality to a closed Riemannian manifold with positive Ricci curvature.
Instead of using the Bochner theorem,
they used the finiteness of the fundamental group,
which allowed them to drop the assumption of orientability.
Using their Choi-Wang type inequality,
Choi-Schoen \cite{CS} showed a compactness theorem for minimal surfaces.
\textcolor{black}{These results were generalized to weighted settings.
In particular,
the \textit{weighted Laplacian} on the weighted Riemannian manifold $(M,g,\e^{-f}v_g)$ is defined by 
\begin{align*}
    \Delta_f \varphi := \Delta \varphi - g(\nabla f, \nabla \varphi)
\end{align*}
for $\varphi \in C^\infty(M)$.
In weighted Riemannian manifolds with positive $\Ric_f^N$,
Choi-Wang type inequalities for the weighted Laplacian have been investigated (see e.g., \cite{MD, LW, CMZ, FS}).
}
As far as we know,
the Choi-Wang type inequality has not been extended to the case $N = 1$.
\textcolor{black}{
In this paper,
we generalize the Choi-Wang inequality in \cite{CW} to manifolds satisfying \eqref{eq:LX-condition}.
We state our main theorem.
We note that notations and terminologies in the following statement are introduced later in section \ref{sec:Choi-Wang}.
Our main theorem is as follows:
\begin{theorem}\label{thm:choi-wang}
Let $(M,g)$ be a compact orientable Riemannian manifold, $u\in C^{\infty}(M)$ and $\alpha,\beta \in \mathbb{R}$.
We set $D:= \nabla^{u,\alpha,\beta}$.
For $K > 0$,
we assume 
\begin{align*}
    \Ric^D \geq K \e^{(\alpha  - \beta)u} g.
\end{align*}
Let $\Sigma$ be a compact orientable embedded $D$-minimal hypersurface in $M$.
Then the first \textcolor{black}{non-zero} eigenvalue $\lambda_{1}(\Delta^D_\Sigma)$ of the $D$-Laplacian $\Delta^D_{\Sigma}$ on $\Sigma$ satisfies 
\begin{align*}
    \lambda_{1}(\Delta^D_\Sigma) \geq \frac{K}{2}.
\end{align*}
\end{theorem}}

A major challenge in extending the argument of \cite{CW} to our setting is that the Bochner type theorem is not directly applicable.  
To overcome this obstacle, 
we introduce a new structure on the Li-Xia type affine connection from the viewpoint of the statistical manifold.
We first explain this structure in detail.  
In the framework of affine differential geometry, 
a statistical manifold is a Riemannian manifold equipped with a torsion free affine connection which satisfies the Codazzi equation. 
Such structures have long been known in the theory of affine hypersurfaces. 
In the context of statistics, 
these structures were introduced by Lauritzen \cite{lauritzen1987statistical} after pioneering works by Amari \cite{amari1982differential,amari1985differential}.
For weighted Riemannian manifolds, 
it was shown by Wylie-Yeroshkin \cite{WY} that the torsion free affine connection introduced by themselves satisfies the Codazzi equation, 
and a more detailed study was given by Yeroshkin \cite{Yero}.
Our statistical structure on Li-Xia type affine connections can be regarded as a natural generalization of the one presented in \cite{WY, Yero}.
Using this new structure,
we are then able to establish the Bochner type theorem under our assumption,
which in turn allows us to prove the Choi-Wang type inequality.
Indeed,
after introducing the statistical structure on Li-Xia type affine connections,
we apply the Bochner type theorem for statistical manifolds obtained by Opozda \cite{O}.
As a consequence,
we obtain a Bochner type theorem for Riemannian manifolds with non-negative Li-Xia type affine Ricci curvature. 
Moreover,
we note that our statistical structure allows us to derive a Lichnerowicz type inequality in our setting via the argument in \cite{O}, 
and it turns out that this provides an alternative proof of a Lichnerowicz type inequality originally shown in Li-Xia \cite{LX}.

Although the Choi-Wang inequality was generalized by Choi-Schoen \cite{CS} to more general manifolds,  
their argument does not directly apply to the setting we consider in this paper.
One key obstruction is that the finiteness of the fundamental group, 
which is assumed in their argument, 
is not known to hold in our situation.
To establish the finiteness of the fundamental group,
it would be necessary to verify the Bishop-Gromov type volume comparison theorem for manifolds satisfying \eqref{eq:LX-condition}.  
We regard this as a direction for future research.

\textcolor{black}{This paper is organized as follows.}
In section \ref{sec:bochner},
we show a statistical structure of Li-Xia type affine connection,
and obtain the Bochner type theorem \textcolor{black}{(Theorem \ref{thm:bochner})}.
In section \ref{sec:Choi-Wang},
\textcolor{black}{
after introducing the notations and terminologies in Theorem \ref{thm:choi-wang},
we give the proof.
}

\section{Bochner type theorem}\label{sec:bochner}
Let $(M,g)$ be an $n$-dimensional Riemannian manifold.
In this paper,
we only consider connected manifolds without boundary.
For a torsion-free affine connection $D$ on $M$,
the \textit{dual connection} $D^*$ for $(M,g,D)$ is defined by the following relation:
\begin{align*}
    X g(Y,Z) = g(D_X Y, Z) + g(Y, D^*_X Z).
\end{align*}
If the \textit{Amari-Chentsov tensor} $C(X,Y,Z) := (D_X g)(Y,Z)$ is symmetric,
$(M,g,D)$ is called \textit{statistical manifold}.
The \textit{Riemannian curvature operator $R^D$} and the \textit{Ricci curvature $\Ric^{D}$ with respect to $D$} are defined as follows: 
\begin{align}
    R^D(X, Y) Z &:= D_X D_Y Z-D_Y D_X Z-D_{[X, Y]} Z, \nonumber\\
    \Ric^D(X,Y) &:= \sum_{i = 1}^n g( R^D(E_i,X)Y,E_i )\label{eq:relation-riem-ric},
\end{align}
where $\{E_1, \dots, E_n\}$ is an orthonormal frame of the tangent bundle.
We have the following duality relation:
\begin{proposition}\label{prop:dual}
Let $(M,g)$ be a Riemannian manifold, ${u}\in C^{\infty}(M)$ and $\alpha,\beta \in \mathbb{R}$.
We set $\tilde{g}:= \e^{2(\alpha-\beta){u}}g$.
We denote the Levi-Civita connection for $g$ and $\tilde{g}$ by $\nabla$ and $\widetilde{\nabla}$, 
respectively.
Then $\widetilde{\nabla}^{-u,\alpha,\beta}$ is the dual connection for $(M,\e^{(\alpha-\beta) {u}}g,\nabla^{u,\alpha,\beta})$, i.e.,
we have 
\begin{align*}
    X (\e^{(\alpha-\beta) {u}}g(Y,Z)) = \e^{(\alpha-\beta) {u}}g(\nabla^{u,\alpha,\beta}_X Y, Z) + \e^{(\alpha-\beta) {u}}g(Y,\widetilde{\nabla}^{-u,\alpha,\beta}_X Z)
\end{align*}
for any vector fields $X,Y,Z$.
\end{proposition}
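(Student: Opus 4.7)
The plan is a direct computation: expand both sides using the defining formula \eqref{eq:LX} for $\nabla^{u,\alpha,\beta}$ and $\widetilde{\nabla}^{-u,\alpha,\beta}$, and compare. The only nontrivial ingredient is the conformal change formula for the Levi-Civita connection, which relates $\widetilde{\nabla}$ to $\nabla$.

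First I would record the conformal change formula. Writing $\tilde g = \e^{2\varphi}g$ with $\varphi = (\alpha-\beta){u}$, the standard identity gives
\[
\widetilde{\nabla}_X Z = \nabla_X Z + (\alpha-\beta)\d u(X)Z + (\alpha-\beta)\d u(Z)X - (\alpha-\beta)g(X,Z)\nabla u.
\]
Next I would simplify the term $\beta\tilde g(X,Z)\widetilde{\nabla}u$ that appears in the definition of $\widetilde{\nabla}^{-u,\alpha,\beta}$. Since $\tilde g = \e^{2(\alpha-\beta)u}g$, the metric dual gives $\widetilde{\nabla}u = \e^{-2(\alpha-\beta)u}\nabla u$, so the two exponential factors cancel and $\tilde g(X,Z)\widetilde{\nabla}u = g(X,Z)\nabla u$. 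Substituting this and the conformal formula into the definition, the $\widetilde{\nabla}$-term and the $(\alpha-\beta)$-terms combine to yield the clean expression
\[
\widetilde{\nabla}^{-u,\alpha,\beta}_X Z = \nabla_X Z - \beta\d u(X)Z - \beta\d u(Z)X - \alpha g(X,Z)\nabla u,
\]
which is just the original \eqref{eq:LX} with the roles of $\alpha$ and $\beta$ swapped (after an overall sign).

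Then I would plug the formulas for $\nabla^{u,\alpha,\beta}_X Y$ and the $\widetilde{\nabla}^{-u,\alpha,\beta}_X Z$ derived above into the right-hand side of the claimed identity. Inside the bracket $\e^{(\alpha-\beta)u}[\,\cdot\,]$, the Levi-Civita terms reproduce $g(\nabla_X Y, Z)+g(Y,\nabla_X Z)$, which matches $\e^{(\alpha-\beta)u}Xg(Y,Z)$ from the left side. The remaining correction terms split into three pairs: the two $\alpha\,\d u(Y)g(X,Z)$ contributions cancel, the two $\beta\,\d u(Z)g(X,Y)$ contributions cancel, and the two $\d u(X)g(Y,Z)$ contributions (one with coefficient $\alpha$, one with coefficient $-\beta$) combine to $(\alpha-\beta)\d u(X)g(Y,Z)$. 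Multiplied by $\e^{(\alpha-\beta)u}$, this is exactly the derivative of the conformal factor, matching the remaining piece of $X(\e^{(\alpha-\beta)u}g(Y,Z))$ on the left.

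I do not expect any serious obstacle; the main thing to be careful about is the bookkeeping of the sign $-u$ in $\widetilde{\nabla}^{-u,\alpha,\beta}$ together with the conformal factor and the identification $\tilde g(X,Z)\widetilde{\nabla}u = g(X,Z)\nabla u$. Once these are handled, the identity reduces to the cancellation of $\alpha$- and $\beta$-terms described above.
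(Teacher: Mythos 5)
Your proposal is correct and follows essentially the same route as the paper: apply the conformal change formula to obtain $\widetilde{\nabla}^{-u,\alpha,\beta}_X Z = \nabla_X Z - \beta\,\d u(X)Z - \beta\,\d u(Z)X - \alpha g(X,Z)\nabla u$, then verify the duality identity by direct cancellation of the $\alpha$- and $\beta$-terms. You even make explicit the small step $\tilde g(X,Z)\widetilde{\nabla}u = g(X,Z)\nabla u$ that the paper leaves implicit.
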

\begin{proof}
Since 
\begin{align*}
    \widetilde{\nabla}_X Z = \nabla_X Z + (\alpha - \beta)\d{u}(X)Z +  (\alpha - \beta)\d{u}(Z)X - (\alpha - \beta)g(X,Z)\nabla{u},
\end{align*}
we see
\begin{align*}
    \widetilde{\nabla}^{-u,\alpha,\beta}_X Z = \nabla_X Z - \beta \d {u}(X)Z - \beta \d {u}(Z)X - \alpha g(X,Z)\nabla {u}.
\end{align*}
For $\overline{g} := \e^{(\alpha -\beta)u}g$,
we obtain 
\begin{align*}
    &\overline{g}(\nabla^{u,\alpha,\beta}_X Y, Z) + \overline{g}(Y,\widetilde{\nabla}^{-u,\alpha,\beta}_X Z)\\
    &= \overline{g}\left( \nabla_XY + \alpha \d {u}(X)Y + \alpha \d {u}(Y)X + \beta g(X,Y)\nabla {u},Z \right)\\
    &\,\,\quad + \overline{g}\left( Y, \nabla_X Z  - \beta \d {u}(X)Z - \beta \d {u}(Z)X - \alpha g(X,Z)\nabla {u}\right)\\
    &= \overline{g}(\nabla_XY, Z) +\overline{g}(Y,\nabla_XZ) + (\alpha - \beta)\d{u}(X)\overline{g} (Y,Z)\\
    &= X \overline{g}(Y,Z).
\end{align*}
\end{proof}
\begin{remark}
We refer to Yeroshkin \cite[Proposition 2.4]{Yero} for the case $u \equiv -f$,
$\alpha = (n-1)^{-1}$ and $\beta = 0$ on a weighted Riemannian manifold $(M,g,\e^{-f}v_g)$.
We also note that this result in \cite{Yero} has a further application in \cite[Proposition 2.4]{fujitani2025steklov}.
\end{remark}
Also,  
we possess the following property:
\begin{proposition}
Let $(M,g)$ be a Riemannian manifold, ${u}\in C^{\infty}(M)$ and $\alpha,\beta \in \mathbb{R}$.
We denote the Levi-Civita connection for $g$ by $\nabla$.
Then $(M,\e^{(\alpha - \beta){u}}g,\nabla^{u,\alpha,\beta})$ is a statistical manifold.
\end{proposition}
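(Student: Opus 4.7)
The plan is to verify the two defining properties of a statistical manifold for $(M,\bar g,D)$, where $\bar g := \e^{(\alpha-\beta)u}g$ and $D := \nabla^{u,\alpha,\beta}$: namely, that $D$ is torsion-free and that the Amari-Chentsov tensor $C(X,Y,Z) = (D_X \bar g)(Y,Z)$ is totally symmetric.

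For torsion-freeness, I would simply inspect the defining formula \eqref{eq:LX}: each of the three correction terms $\alpha\,\d u(X)Y + \alpha\,\d u(Y)X$ and $\beta\, g(X,Y)\nabla u$ is manifestly symmetric in $X$ and $Y$, so $D_XY - D_YX = \nabla_XY - \nabla_YX = [X,Y]$.

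The main content is the symmetry of $C$. Rather than expanding $(D_X\bar g)(Y,Z) = X\bar g(Y,Z) - \bar g(D_XY,Z) - \bar g(Y,D_XZ)$ by brute force, I would exploit Proposition~\ref{prop:dual}, which gives
\begin{align*}
    X\bar g(Y,Z) - \bar g(D_XY,Z) = \bar g\!\left(Y,\widetilde\nabla^{-u,\alpha,\beta}_X Z\right),
\end{align*}
and hence
\begin{align*}
    C(X,Y,Z) = \bar g\!\left(Y,\,\widetilde\nabla^{-u,\alpha,\beta}_X Z - D_X Z\right).
\end{align*}
Using the two explicit expressions displayed in the proof of Proposition~\ref{prop:dual}, the difference inside the metric collapses to
\begin{align*}
    \widetilde\nabla^{-u,\alpha,\beta}_X Z - D_X Z = -(\alpha+\beta)\bigl[\d u(X)Z + \d u(Z)X + g(X,Z)\nabla u\bigr],
\end{align*}
since the $\nabla_X Z$ terms cancel and the remaining coefficients combine as $-\beta-\alpha$ in each slot.

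Substituting back, and using the conformal relation $\bar g(\cdot,\cdot) = \e^{(\alpha-\beta)u}g(\cdot,\cdot)$ (so that $g(X,Z)\bar g(Y,\nabla u) = \bar g(X,Z)\,\d u(Y)$), I obtain
\begin{align*}
    C(X,Y,Z) = -(\alpha+\beta)\bigl[\d u(X)\,\bar g(Y,Z) + \d u(Y)\,\bar g(X,Z) + \d u(Z)\,\bar g(X,Y)\bigr],
\end{align*}
which is manifestly symmetric in all three arguments. There is no real obstacle here; the only subtlety is recognising that invoking Proposition~\ref{prop:dual} bypasses a longer direct computation of $D_X\bar g$ that would require differentiating the conformal factor explicitly.
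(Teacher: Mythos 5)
Your proposal is correct and follows essentially the same route as the paper: the paper's proof likewise invokes Proposition~\ref{prop:dual} to reduce $(D_X\overline{g})(Y,Z)$ to $\overline{g}(Y,\widetilde{\nabla}^{-u,\alpha,\beta}_XZ-D_XZ)$ and then reads off the totally symmetric expression $-(\alpha+\beta)(\d u(X)\overline{g}(Y,Z)+\d u(Y)\overline{g}(X,Z)+\d u(Z)\overline{g}(X,Y))$. Your explicit check of torsion-freeness is a harmless addition that the paper leaves implicit, since its definition of a statistical manifold already presupposes a torsion-free connection.
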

\begin{proof}
We set $\overline{g} := \e^{(\alpha - \beta)u}g$,
$D := \nabla^{u,\alpha,\beta}$ and $D^* := \widetilde{\nabla}^{-u,\alpha,\beta}$,
where we use the same notations as in Proposition \ref{prop:dual}.
Using the dualistic structure obtained in Proposition \ref{prop:dual},
we have 
\begin{align*}
    (D_X \overline{g})(Y,Z) &= X \overline{g}(Y,Z) - \overline{g}(D_X Y, Z) - \overline{g}(Y, D_X Z)\\
    &= \overline{g}(D_X Y, Z) + \overline{g}(Y, D^*_X Z) - \overline{g}(D_X Y, Z) - \overline{g}(Y, D_X Z)\\
    &= \overline{g}(Y, D^*_X Z) - \overline{g}(Y, D_X Z)\\
    &= \overline{g}(Y, \nabla_X Z - \beta \d u(X) Z - \beta \d u(Z)X - \alpha g (X,Z)\nabla u)\\
    &\,\,\quad -\overline{g}(Y, \nabla_X Z + \alpha \d u(X)Z + \alpha \d u(Z)X + \beta g (X,Z)\nabla u)\\
    &= -(\alpha + \beta) (\d u(X)\overline{g}(Y,Z) + \d u(Z) \overline{g}(X, Y) + \d u(Y) \overline{g}(X,Z)).
\end{align*}
This implies the Amari-Chentsov tensor $(D_X \overline{g})(Y,Z)$ is symmetric.
\end{proof}
\begin{remark}
We refer to Wylie-Yeroshkin \cite[Proposition 5.30]{WY} and Yeroshkin \cite[Remark 2.2]{Yero} for the case $u \equiv -f$,
$\alpha = (n-1)^{-1}$ and $\beta = 0$ on a weighted Riemannian manifold $(M,g,\e^{-f}v_g)$.
\end{remark}
For a measure $\mu$ on $M$,
a quadruple $(M,g,D,\mu)$ is called \textit{equiaffine} when $D\mu = 0$.
By using an equiaffine structure associated with the Li-Xia type affine connection,
we obtain a Bochner type theorem for the first Betti number $b_1(M)$ as follows:
\begin{theorem}\label{thm:bochner}
Let $(M,g)$ be an $n$-dimensional orientable compact Riemannian manifold,
${u}\in C^{\infty}(M)$ and $\alpha,\beta \in \mathbb{R}$.
We denote the Levi-Civita connection for $g$ by $\nabla$ and $D := \nabla^{u,\alpha, \beta}$.
We assume $\Ric^D \geq 0$.
Then we have $b_1(M) \leq n$.
Suppose additionally that $\Ric^D > 0$ at some point,
then $b_1(M) = 0$.
\end{theorem}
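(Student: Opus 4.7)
The plan is to realize $(M, \overline{g}, D, \mu)$ as an equiaffine statistical manifold for a suitably chosen volume form $\mu$, and then to invoke Opozda's Bochner type theorem from \cite{O}. The statistical-manifold structure on $(M, \overline{g}, D)$ is already supplied by the previous proposition, so the remaining task is to exhibit a $D$-parallel volume form and check that Opozda's framework applies.

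First I will construct the equiaffine measure. Writing $D = \nabla + T$ with the difference tensor
\begin{align*}
T_X Y := \alpha \d u(X) Y + \alpha \d u(Y) X + \beta g(X,Y)\nabla u,
\end{align*}
a short calculation in a local $g$-orthonormal frame gives
\begin{align*}
\mathrm{tr}(T_X) = \bigl[(n+1)\alpha + \beta\bigr]\,\d u(X).
\end{align*}
Combining $\nabla v_g = 0$ with the general identity $D_X v_g = -\mathrm{tr}(T_X)\,v_g$, the choice $\mu := e^{[(n+1)\alpha + \beta]u} v_g$ yields
\begin{align*}
D_X \mu = \bigl(X\log e^{[(n+1)\alpha + \beta]u} - \mathrm{tr}(T_X)\bigr)\mu = 0,
\end{align*}
so $(M, \overline{g}, D, \mu)$ is equiaffine.

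With the equiaffine structure in place, I will apply Opozda's Bochner type theorem for equiaffine statistical manifolds to the space of harmonic $1$-forms on $M$. Because $D\mu = 0$, integration by parts against $\mu$ behaves as in the classical setting, and the resulting integrated Bochner identity has $\Ric^D$ as its curvature term. The hypothesis $\Ric^D \geq 0$ then forces every harmonic $1$-form to be $D$-parallel; since $D$-parallel $1$-forms span a space of dimension at most $n$, this gives $b_1(M) \leq n$. If moreover $\Ric^D > 0$ at some point $p$, the identity forces any harmonic $1$-form to vanish at $p$, and being $D$-parallel it must vanish identically, whence $b_1(M) = 0$.

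The main technical obstacle lies in the preparatory step: pinning down the correct conformal factor $e^{[(n+1)\alpha + \beta]u}$ via the trace computation and verifying that all hypotheses in Opozda's framework (torsion-freeness, the Codazzi property, and $D$-parallelism of $\mu$) are met, so that the Bochner identity of \cite{O} is legitimately available in our setting. A minor point to keep in mind is that $b_1(M)$ here is the topological first Betti number, which is metric-independent, so the identification of harmonic $1$-forms (whether with respect to $g$ or $\overline{g}$) does not affect the dimension count. Once the equiaffine setup is verified, the conclusion follows by direct appeal to \cite{O}.
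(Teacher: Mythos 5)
Your proposal is correct and follows essentially the same route as the paper: exhibit the equiaffine structure $(M,\e^{(\alpha-\beta)u}g, D, \e^{((n+1)\alpha+\beta)u}v_g)$ and invoke Opozda's Bochner type theorem. The only difference is cosmetic — you derive $D(\e^{((n+1)\alpha+\beta)u}v_g)=0$ directly from the trace of the difference tensor (correctly), whereas the paper cites Li--Xia's Corollary 3.5 for the same fact.
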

\begin{proof}
We set $V := \e^u$ and $\tau := (n+ 1)\alpha + \beta$.
It follows from Li-Xia \cite[Corollary 3.5]{LX} that 
\begin{align*}
    D \left( V^\tau v_g \right) = 0.
\end{align*}
This implies $(M,V^{\alpha - \beta}g, D, V^\tau v_g)$ is equiaffine.
Therefore,
we may apply Opozda \cite[Theorem 9.6]{O} and we arrive at the desired assertion.
\end{proof}

\begin{remark}
    \begin{enumerate}
        \item[] 
        \vskip.25\baselineskip
        \item[(i)] Together with the relation $\Ric_f^\infty \leq \Ric_f^1$,
        this can be regarded as a generalization of the first Betti number estimate for weighted Riemannian manifolds with non-negative $\Ric_f^\infty$ obtained by Lott \cite[Theorem 1 (3)]{L} if we restrict ourselves to orientable manifolds.
        \vskip.25\baselineskip
        \item[(ii)] Regarding the equiaffine structure in the proof,
        we refer to Wylie-Yeroshkin \cite[Corollary 3.5]{WY} for the case $u \equiv -f$,
        $\alpha = (n-1)^{-1}$ and $\beta = 0$ on a weighted Riemannian manifold $(M,g,\e^{-f}v_g)$.
        \vskip.25\baselineskip
        \item[(iii)] This equiaffine structure gives an alternative proof of the Lichnerowicz type inequality in Li-Xia \cite{LX}.
        Indeed,
        applying Opozda \cite[Theorem 9.9]{O} to this equiaffine structure yields the Lichnerowicz inequality in \cite[Theorem 1.3 (iii, a)]{LX}.
    \end{enumerate}
\end{remark}

\section{\textcolor{black}{Proof of the }Choi-Wang inequality}\label{sec:Choi-Wang}
\textcolor{black}{In this section,
we first introduce the notations and terminologies in Theorem \ref{thm:choi-wang} and recall the Reilly formula in Li-Xia \cite{LX}.}
Let $\nabla$ be the Levi-Civita connection \textcolor{black}{of} $g$.
For ${u} \in C^{\infty}(M)$ and $\alpha,\beta \in \mathbb{R}$,
we set $V := \e^u$ \textcolor{black}{and} $D := \nabla^{u,\alpha,\beta}$.
For $\varphi \in C^{\infty}(M)$,
the \textcolor{black}{$D$-}\textit{gradient $\nabla^D$}, the \textcolor{black}{$D$-}\textit{Hessian $\Hess^D$} and  the \textcolor{black}{$D$-}\textit{Laplacian $\Delta^D$} are defined as follows (see \cite[Definition 2.4]{LX}):
\begin{align*}
    {\nabla}^D \varphi &:=V^{\beta -\alpha} {\nabla} \varphi,  \\
    \textcolor{black}{\Hess^D \varphi(X,Y)} & \textcolor{black}{:=g(D_X (\nabla^D \varphi),Y)}\\
    & =V^{\beta - \alpha }\left(\Hess\, \varphi + \beta \d u \otimes \d \varphi + \beta \d \varphi \otimes \d u + \alpha g({\nabla} u, {\nabla} \varphi) {g}\right)\textcolor{black}{(X,Y)},\\
    {\Delta}^D \varphi &:=\operatorname{tr}\left(\Hess^D \varphi\right) = V^{\beta - \alpha}\left\{{\Delta} \varphi + (n \alpha + 2 \beta) g({\nabla} u, {\nabla} \varphi)\right\} .
\end{align*}
\textcolor{black}{Let $\Omega$ be a compact set in $M$ and $\nu$ be the outward unit normal vector field on $\partial \Omega$.
On $\partial \Omega$,
we set the second fundamental form $\mathrm{II}_{\partial \Omega}(X,Y) := g(\nabla_X \nu, Y)$ and the mean curvature $H_{\partial \Omega} := \mathrm{tr} \,\mathrm{II}_{\partial \Omega}$.}
We define the \textit{second fundamental form $\mathrm{II}_{\partial \Omega}^D$} and the \textit{mean curvature $H^D_{\partial \Omega}$ with respect to $D$} by
\begin{align}\label{def:affine-mean-curvature}
    \mathrm{II}_{\partial \Omega}^D &:= \mathrm{II}_{\partial \Omega} - \beta u_\nu \,g_{\partial \Omega}, \quad\,\, H^D_{\partial \Omega} := H_{\partial \Omega} + (n-1) \alpha u_\nu,
\end{align}
where \textcolor{black}{$g_{\partial \Omega}$ is the metric on $\partial \Omega$ induced from $g$ and $u_\nu := g(\nabla u, \nu)$}.
\textcolor{black}{
A hypersurface $\Sigma$ in $M$ is called \textit{$D$-minimal} if $H_\Sigma^D$ vanishes identically on $\Sigma$, 
where $H_\Sigma^D$ is defined in the same manner as in \eqref{def:affine-mean-curvature} for a unit normal vector field $\nu$ on $\Sigma$.
}
\textcolor{black}{We denote the Riemannian volume measure of $g_{\partial \Omega}$ by $v_{g,\partial \Omega}$ and $D$-gradient on $(\partial \Omega, g_{\partial \Omega})$ by $\nabla^D_{\partial \Omega}$.}
In \cite[Theorem 1.1]{LX}, 
they obtained the following Reilly formula:
\begin{align}\label{eq:d-reilly}
    & \quad\,\,\int_\Omega V^\tau \left(({\Delta}^D \varphi)^2-\left| \Hess^D  \varphi\right|^2- {\Ric}^D\left({\nabla}^D \varphi, {\nabla}^D \varphi\right) \right) \ \d v_g\\
    &=  \int_{\partial \Omega} V^\tau \left(H^D_{\partial \Omega}\, g\left({\nabla}^D \varphi, \nu\right)^2+\mathrm{II}_{\partial \Omega}^D\left(\nabla_{\partial \Omega}^D\, \psi, \nabla_{\partial \Omega}^D \,\psi\right)-2 V^{-\beta} g\left(\nabla_{\partial \Omega}^D\, \psi, \nabla_{\partial \Omega}^D\left(V^\beta \varphi_\nu\right)\right)\right)\ \d v_{g,\partial \Omega}\nonumber ,
\end{align}
where $\psi := \varphi|_{\partial \Omega}$,
$\tau := (n + 1)\alpha + \beta$,
\textcolor{black}{
We also have the following integration by parts formula (see e.g., \cite{LX,HBM}):
\begin{align}\label{eq:integration-by-parts}
    \int_\Omega V^\tau \varphi_1 \Delta^D\varphi_2 \ \d v_g = \int_{\partial \Omega} V^\tau \psi_1 g\left( \nabla^D\varphi_2, \nu \right)\ \d v_{g,\partial \Omega} - \int_\Omega V^\tau g \left( \nabla \varphi_1, \nabla^D \varphi_2 \right)\ \d v_g
\end{align}
for $\varphi_1, \varphi_2 \in C^\infty(\overline{\Omega})$ and $\psi_1 := \varphi_1|_{\partial \Omega}$.
}
\textcolor{black}{
We are now in a position to prove Theorem \ref{thm:choi-wang}.
}
\begin{proof}[\textcolor{black}{Proof of Theorem \ref{thm:choi-wang}}]
We apply the argument in \cite[Theorem 2]{CW}.
\textcolor{black}{
By Theorem \ref{thm:bochner},
we have $H^1(M;\mathbb{R}) = 0$,
which implies $H_{n-1}(M;\mathbb{R}) = 0$.
Combining this with the following exact sequence:
\begin{align*}
    \cdots \rightarrow H_n(M,\Sigma; \mathbb{R}) \rightarrow H_{n-1}(\Sigma ; \mathbb{R}) \rightarrow H_{n-1}(M; \mathbb{R}) \rightarrow \cdots,
\end{align*}
it follows that $H_n(M,\Sigma;\mathbb{R}) \rightarrow H_{n-1}(\Sigma;\mathbb{R})$ is surjective.
Therefore,
we see that $\Sigma$ divides $M$ into two components.
}

Let $\Omega$ be one component of these two components.
We note that $\Sigma = \partial \Omega$.
Let $\psi$ be the first eigenfunction of $\Delta^D_{\Sigma}$ such that $\Delta^D_{\Sigma}\, \psi + \lambda_{1} \psi = 0$ on $\Sigma$,
where we denote $\lambda_1 := \lambda_1(\Delta_\Sigma^D)$ for brevity of notations \textcolor{black}{(see Remarks \ref{rem:PDE} and \ref{rem:Laplace} below)}.
Also, 
let $\varphi$ be the solution of the following boundary value problem:
\begin{align}\label{eq:choi-wang-boundary-problem}
    \begin{cases}
        \Delta^D \varphi=0 & \text { on } \Omega, \\
        \varphi =\psi  & \text { on }   \partial  \Omega .
    \end{cases}
\end{align}
Let $V := \e^u$ and $n$ be the dimension of $M$.
By applying the Reilly formula \eqref{eq:d-reilly},
we see
\begin{align}\label{eq:choi-wang-2}
    0 \geq K \int_\Omega V^{n\alpha + 2\beta} |\nabla \varphi|^2 \ \d v_g + \int_{\partial \Omega}V^\tau \left(\mathrm{II}^D_{\partial \Omega}(\nabla^D_{\partial \Omega} \,\psi, \nabla^D_{\partial \Omega} \,\psi) - 2 V^{-\beta} g( \nabla^D_{\partial \Omega} \,\psi, \nabla^D_{\partial\Omega}(V^\beta \varphi_\nu) )\right)\ \d v_{g, \partial \Omega},
\end{align}
where $\nu$ is the outer unit normal vector field on $\partial \Omega$.
Here,
we have
\textcolor{black}{ 
\begin{align*}
    \int_{\partial \Omega} V^{\tau - \beta}g ( \nabla^D_{\partial \Omega} \,\psi, \nabla^D_{\partial \Omega}(V^\beta \varphi_\nu) )\ \d v_{g, \partial \Omega} 
    &= \int_{\partial \Omega} V^{n\alpha + \beta} g\left( \nabla_{\partial \Omega} \,\psi, \nabla^D_{\partial \Omega} \left( V^\beta \varphi_\nu\right) \right)\ \d v_{g,\partial \Omega}\\
    &= -\int_{\partial \Omega} V^{n\alpha + 2\beta} \varphi_\nu \,\Delta_{\partial \Omega}^D \,\psi\ \d v_{g,\partial \Omega}\\
    &= \lambda_1\int_{\partial\Omega} V^{n\alpha + 2\beta} \varphi_\nu\, \psi\ \d v_{g, \partial \Omega},
\end{align*}
where we used the integration by parts \eqref{eq:integration-by-parts} on $\Sigma$ in the second equality.}
\textcolor{black}{
Using \eqref{eq:integration-by-parts} and \eqref{eq:choi-wang-boundary-problem},
the last term is calculated as follows:
\begin{align*}
    \lambda_1\int_{\partial\Omega} V^{n\alpha + 2\beta} \varphi_\nu\, \psi\ \d v_{g, \partial \Omega} 
    &= \lambda_1 \int_{\partial \Omega} V^\tau \psi g\left( \nabla^D\varphi, \nu \right)\ \d v_{g,\partial \Omega}\\
    &= \lambda_1 \int_{\Omega} V^\tau g \left( \nabla\varphi, \nabla^D\varphi \right)\ \d v_g.
\end{align*}
}
Combining this with \eqref{eq:choi-wang-2}, 
we have 
\begin{equation*}
    0\geq (K - 2\lambda_1) \int_\Omega V^{n\alpha + 2\beta} |\nabla \varphi|^2 \ \d v_g + \int_{\partial \Omega} V^\tau \mathrm{II}^D_{\partial\Omega} (\nabla_{\partial \Omega}\,\textcolor{black}{\psi}, \nabla_{\partial \Omega}\,\textcolor{black}{\psi})\ \d v_{g,\partial\Omega}.
\end{equation*}
Since the sign of $\mathrm{II}_{\partial\Omega}^D$ changes depending on which component we choose as $\Omega$,
we may assume 
\begin{align*}
    \int_{\partial \Omega} V^\tau \mathrm{II}^D_{\partial\Omega} (\nabla_{\partial \Omega}\,\textcolor{black}{\psi}, \nabla_{\partial \Omega}\,\textcolor{black}{\psi})\ \d v_{g,\partial\Omega}\geq 0.
\end{align*}
Therefore, we have 
\begin{align*}
    0\geq (K - 2\lambda_1) \int_\Omega V^{n\alpha + 2\beta} |\nabla \varphi|^2 \ \d v_g.
\end{align*}
We complete the proof.
\end{proof}

\begin{remark}\label{rem:PDE}
\textcolor{black}{
We remark on $\lambda_1(\Delta^D_\Sigma), \psi$ and $\varphi$ in the proof above.
The existence of $\lambda_1(\Delta^D_\Sigma) > 0$ and $\psi\in W^{1,2}(\Sigma, v_{g,\Sigma})$ follows from a standard argument (see e.g., Schm\"{u}dgen \cite{konrad}) since we have the integration by parts formula for $\Delta^D_\Sigma$.
Indeed,
the formula \eqref{eq:integration-by-parts} on $\Sigma$ is written as follows: 
\begin{align}\label{eq:ibp-sigma}
    \int_\Sigma V^{n\alpha + \beta} \psi_1 \Delta^D_\Sigma\, \psi_2 \ \d v_{g,\Sigma} = - \int_\Sigma V^{n\alpha + \beta} g_\Sigma (\nabla_\Sigma \,\psi_1, \nabla_\Sigma^D\, \psi_2)\ \d v_{g,\Sigma}
\end{align}
for $\psi_1, \psi_2 \in C^\infty(\Sigma)$.
Inspired by this,
we set a bilinear form $\mathfrak{t}$ as follows:
\begin{align*}
    \mathfrak{t}(\psi_1, \psi_2) := \int_\Sigma V^{n\alpha + \beta} g_\Sigma(\nabla_\Sigma\, \psi_1, \nabla^D_\Sigma \,\psi_2)\ \d v_{g,\Sigma} + \int_\Sigma  V^{n\alpha + \beta}\,\psi_1 \psi_2 \ \d v_{g,\Sigma}
\end{align*}
for $\psi_1,\psi_2 \in W^{1,2}(\Sigma,v_{g,\Sigma})$.
Since $\Sigma$ is compact,
$V$ is bounded.
In particular,
$\mathfrak{t}$ is continuous and coercive with respect to the $W^{1,2}$-norm.
Here,
we note that the norms of $W^{1,2}(\Sigma,v_{g,\Sigma})$ and $W^{1,2}(\Sigma,V^{n\alpha + \beta}v_{g,\Sigma})$ are equivalent,
where $W^{1,2}(\Sigma,V^{n\alpha + \beta}v_{g,\Sigma})$ is the weighted Sobolev space equipped with the following norm:
\begin{align*}
    \|\phi\|_{W^{1,2}(\Sigma,V^{n\alpha + \beta}v_{g,\Sigma})} := \left(\int_\Sigma V^{n\alpha + \beta} \left(\phi^2 + |\nabla_\Sigma \,\phi|^2\right)  \ \d v_{g,\Sigma} \right)^{\frac{1}{2}} .
\end{align*}
Furthermore,
the embedding
\begin{align*}
    W^{1,2}(\Sigma,v_{g,\Sigma}) \hookrightarrow L^2(\Sigma, V^{n\alpha + \beta}v_{g,\Sigma})
\end{align*}
is compact.
Therefore,
we may apply \cite[Theorem 11.3]{konrad} to the bilinear form $\mathfrak{t}$.
Together with \eqref{eq:ibp-sigma},
we see that $\Delta^D_\Sigma$ has discrete spectrum.
This leads us to the existence of $\lambda_1(\Delta^D_\Sigma) > 0$ and $\psi\in W^{1,2}(\Sigma, v_{g,\Sigma})$ (see also \cite[Proposition 5.12]{konrad}).}

\textcolor{black}{
The smoothness of $\psi$ follows from standard elliptic theory (see e.g., Gilbarg-Trudinger \cite{GT}).
More precisely,
by \cite[Lemma 3.4]{LX},
$\Delta^D_\Sigma$ can be written in divergence form as follows:
\begin{align*}
    \Delta^D_\Sigma \,\psi = V^{-(n\alpha + \beta)} \mathrm{div}_{\Sigma}\left( V^{n\alpha + \beta}\nabla^D_\Sigma \,\psi \right),
\end{align*}
where $\mathrm{div}_\Sigma$ denotes the divergence on $(\Sigma, g_\Sigma)$.
Hence,
we may apply \cite[Corollary 8.11]{GT} to our setting.
This yields the smoothness of $\psi$.
Lastly,
we note that the existence and smoothness of the solution $\varphi$ of \eqref{eq:choi-wang-boundary-problem} follow by applying \cite[Theorem 6.14]{GT}.}
\end{remark}
\begin{remark}\label{rem:Laplace}
\textcolor{black}{
There is another perspective on the argument in Remark \ref{rem:PDE}.
Let $(M,g)$ be an $n$-dimensional Riemannian manifold,
$u\in C^\infty(M)$ and $\alpha, \beta \in \mathbb{R}$.
We set $D := \nabla^{u,\alpha,\beta}$ and $\tau := (n + 1)\alpha + \beta$.
Then,
a direct calculation shows that $\Delta^D$ is the weighted Laplacian on the weighted Riemannian manifold $(M,\e^{(\alpha - \beta)u}g, \e^{\tau u}v_g)$.
Hence,
the existence of $\lambda_1(\Delta^D_\Sigma), \psi$ and $\varphi$ in the proof of Theorem \ref{thm:choi-wang} can also be obtained by applying the argument in the weighted setting to our case.}
\end{remark}

\begin{remark}
We refer to Choi-Wang \cite[Theorem 2]{CW} for the case $u\equiv 0$.
The argument in Choi-Schoen \cite{CS},
where they extend the Choi-Wang inequality in \cite{CW} to a broader class of manifolds,
cannot be directly adapted in our setting.
In their argument,
both the Frankel theorem and the finiteness of the fundamental group are essential.
In our setting, 
we can apply the argument in Li-Wei \cite[Lemma 5]{LW} using the Reilly formula \eqref{eq:d-reilly},
and this leads to the conclusion that a Frankel type theorem holds in our case as well (see also \cite[Remark 3.8]{FS} for the case of weighted Riemannian manifolds).
However,
regarding the finiteness of the fundamental group,
there is still a difficulty in obtaining it in our setting.
As mentioned in the introduction,
this issue is left for future work.
\end{remark}

\subsection*{{\textbf{Acknowledgements}}}
The author would like to express sincere gratitude to Ryu Ueno for fruitful discussions.
\textcolor{black}{The author is grateful to Professors Norihisa Ikoma and Yohei Sakurai for their valuable comments.}
This work was supported by JSPS KAKENHI Grant Number JP25KJ0271.

\bibliographystyle{amsplain}
\bibliography{ref} 

@article {O,
    AUTHOR = {Opozda, Barbara},
     TITLE = {Bochner's technique for statistical structures},
   JOURNAL = {Ann. Global Anal. Geom.},
  FJOURNAL = {Annals of Global Analysis and Geometry},
    VOLUME = {48},
      YEAR = {2015},
    NUMBER = {4},
     PAGES = {357--395},
      ISSN = {0232-704X,1572-9060},
   MRCLASS = {53B05 (53A15 53B20 53C05)},
MRREVIEWER = {Linyu\ Peng},
       DOI = {10.1007/s10455-015-9475-z},
       URL = {https://doi.org/10.1007/s10455-015-9475-z},
}

@article {brendle2013constant,
    AUTHOR = {Brendle, Simon},
     TITLE = {Constant mean curvature surfaces in warped product manifolds},
   JOURNAL = {Publ. Math. Inst. Hautes \'Etudes Sci.},
  FJOURNAL = {Publications Math\'ematiques. Institut de Hautes \'Etudes
              Scientifiques},
    VOLUME = {117},
      YEAR = {2013},
     PAGES = {247--269},
      ISSN = {0073-8301,1618-1913},
   MRCLASS = {53A10 (53C45)},
MRREVIEWER = {Andrew\ Bucki},
       DOI = {10.1007/s10240-012-0047-5},
       URL = {https://doi.org/10.1007/s10240-012-0047-5},
}

@article {bochner1948curvature,
    AUTHOR = {Bochner, S.},
     TITLE = {Curvature and {B}etti numbers},
   JOURNAL = {Ann. of Math. (2)},
  FJOURNAL = {Annals of Mathematics. Second Series},
    VOLUME = {49},
      YEAR = {1948},
     PAGES = {379--390},
      ISSN = {0003-486X},
   MRCLASS = {53.0X},
MRREVIEWER = {S.\ B.\ Myers},
       DOI = {10.2307/1969287},
       URL = {https://doi.org/10.2307/1969287},
}

@article{WY,
  title={{On the geometry of Riemannian manifolds with density}},
  author={Wylie, William and Yeroshkin, Dmytro},
  journal={arXiv preprint:1602.08000},
  year={2016}
}

@article{Yero,
  title={Holonomy of Manifolds with Density},
  author={Yeroshkin, Dmytro},
  journal={arXiv preprint:2009.08733},
  year={2020}
}

@article {LX,
    AUTHOR = {Li, Junfang and Xia, Chao},
     TITLE = {An integral formula for affine connections},
   JOURNAL = {J. Geom. Anal.},
  FJOURNAL = {Journal of Geometric Analysis},
    VOLUME = {27},
      YEAR = {2017},
    NUMBER = {3},
     PAGES = {2539--2556},
      ISSN = {1050-6926,1559-002X},
   MRCLASS = {53B05 (53B21 53C21)},
MRREVIEWER = {L\'{e}onard\ Todjihounde},
       DOI = {10.1007/s12220-017-9771-x},
       URL = {https://doi.org/10.1007/s12220-017-9771-x},
}

@article {L,
    AUTHOR = {Lott, John},
     TITLE = {Some geometric properties of the {B}akry-\'{E}mery-{R}icci
              tensor},
   JOURNAL = {Comment. Math. Helv.},
  FJOURNAL = {Commentarii Mathematici Helvetici},
    VOLUME = {78},
      YEAR = {2003},
    NUMBER = {4},
     PAGES = {865--883},
      ISSN = {0010-2571,1420-8946},
   MRCLASS = {53C21 (58J50)},
MRREVIEWER = {Thomas\ Schick},
       DOI = {10.1007/s00014-003-0775-8},
       URL = {https://doi.org/10.1007/s00014-003-0775-8},
}

@article {HBM,
    AUTHOR = {Huang, Guangyue and Ma, Bingqing and Zhu, Mingfang},
     TITLE = {A {R}eilly type integral formula and its applications},
   JOURNAL = {Differential Geom. Appl.},
  FJOURNAL = {Differential Geometry and its Applications},
    VOLUME = {94},
      YEAR = {2024},
     PAGES = {Paper No. 102136, 20},
      ISSN = {0926-2245,1872-6984},
   MRCLASS = {53C21 (58J32)},
       DOI = {10.1016/j.difgeo.2024.102136},
       URL = {https://doi.org/10.1016/j.difgeo.2024.102136},
}

@article{BF,
  title={{Comparison geometry for substatic manifolds and a weighted Isoperimetric Inequality}},
  author={Borghini, Stefano and Fogagnolo, Mattia},
  journal={arXiv preprint:2307.14618},
  year={2023}
}

@article {CMZ,
    AUTHOR = {Cheng, Xu and Mejia, Tito and Zhou, Detang},
     TITLE = {{Eigenvalue estimate and compactness for closed $f$-minimal
              surfaces}},
   JOURNAL = {Pacific J. Math.},
  FJOURNAL = {Pacific Journal of Mathematics},
    VOLUME = {271},
      YEAR = {2014},
    NUMBER = {2},
     PAGES = {347--367},
      ISSN = {0030-8730,1945-5844},
   MRCLASS = {58J50 (58E30)},
MRREVIEWER = {Yong\ Wei},
       DOI = {10.2140/pjm.2014.271.347},
       URL = {https://doi.org/10.2140/pjm.2014.271.347},
}

@article {CW,
    AUTHOR = {Choi, Hyeong In and Wang, Ai Nung},
     TITLE = {{A first eigenvalue estimate for minimal hypersurfaces}},
   JOURNAL = {J. Differential Geom.},
  FJOURNAL = {Journal of Differential Geometry},
    VOLUME = {18},
      YEAR = {1983},
    NUMBER = {3},
     PAGES = {559--562},
      ISSN = {0022-040X,1945-743X},
   MRCLASS = {53C42 (58G25)},
MRREVIEWER = {Samuel\ I.\ Goldberg},
       URL = {http://projecteuclid.org/euclid.jdg/1214437788},
}

@book {GT,
    AUTHOR = {Gilbarg, David and Trudinger, Neil S.},
     TITLE = {{Elliptic partial differential equations of second order}},
    SERIES = {Grundlehren der Mathematischen Wissenschaften},
    VOLUME = {Vol. 224},
 PUBLISHER = {Springer-Verlag, Berlin-New York},
      YEAR = {1977},
     PAGES = {x+401},
      ISBN = {3-540-08007-4},
   MRCLASS = {35-02 (35J25 35J65)},
MRREVIEWER = {O.\ John},
}

@article {K,
    AUTHOR = {Klartag, Bo\'{a}z},
     TITLE = {{Needle decompositions in Riemannian geometry}},
   JOURNAL = {Mem. Amer. Math. Soc.},
  FJOURNAL = {Memoirs of the American Mathematical Society},
    VOLUME = {249},
      YEAR = {2017},
    NUMBER = {1180},
     PAGES = {v+77},
      ISSN = {0065-9266,1947-6221},
      ISBN = {978-1-4704-2542-5; 978-1-4704-4127-2},
   MRCLASS = {53C21 (52A20 52A40)},
MRREVIEWER = {Vasyl\ Gorkavyy},
       DOI = {10.1090/memo/1180},
       URL = {https://doi.org/10.1090/memo/1180},
}

@article {LW,
    AUTHOR = {Li, Haizhong and Wei, Yong},
     TITLE = {{$f$}-minimal surface and manifold with positive
              {$m$}-{B}akry-\'{E}mery {R}icci curvature},
   JOURNAL = {J. Geom. Anal.},
  FJOURNAL = {Journal of Geometric Analysis},
    VOLUME = {25},
      YEAR = {2015},
    NUMBER = {1},
     PAGES = {421--435},
      ISSN = {1050-6926,1559-002X},
   MRCLASS = {53C42 (53C21 58C40)},
MRREVIEWER = {Isabel\ M. C. Salavessa},
       DOI = {10.1007/s12220-013-9434-5},
       URL = {https://doi.org/10.1007/s12220-013-9434-5},
}

@article {MD,
    AUTHOR = {Ma, Li and Du, Sheng-Hua},
     TITLE = {Extension of {R}eilly formula with applications to eigenvalue
              estimates for drifting {L}aplacians},
   JOURNAL = {C. R. Math. Acad. Sci. Paris},
  FJOURNAL = {Comptes Rendus Math\'{e}matique. Acad\'{e}mie des Sciences.
              Paris},
    VOLUME = {348},
      YEAR = {2010},
    NUMBER = {21-22},
     PAGES = {1203--1206},
      ISSN = {1631-073X,1778-3569},
   MRCLASS = {58J50 (58J05)},
MRREVIEWER = {Rodney\ Josu\'{e}\ Biezuner},
       DOI = {10.1016/j.crma.2010.10.003},
       URL = {https://doi.org/10.1016/j.crma.2010.10.003},
}

@article {R,
    AUTHOR = {Reilly, Robert C.},
     TITLE = {Geometric applications of the solvability of {N}eumann
              problems on a {R}iemannian manifold},
   JOURNAL = {Arch. Rational Mech. Anal.},
  FJOURNAL = {Archive for Rational Mechanics and Analysis},
    VOLUME = {75},
      YEAR = {1980},
    NUMBER = {1},
     PAGES = {23--29},
      ISSN = {0003-9527},
   MRCLASS = {53C65 (52A40)},
MRREVIEWER = {Harold\ Donnelly},
       DOI = {10.1007/BF00284618},
       URL = {https://doi.org/10.1007/BF00284618},
}

@book {V,
    AUTHOR = {Villani, C\'{e}dric},
     TITLE = {Optimal transport : Old and new},
    SERIES = {Grundlehren der mathematischen Wissenschaften},
    VOLUME = {338},
 PUBLISHER = {Springer-Verlag, Berlin},
      YEAR = {2009},
     PAGES = {xxii+973},
      ISBN = {978-3-540-71049-3},
   MRCLASS = {49-02 (28A75 37J50 49Q20 53C23 58E30)},
MRREVIEWER = {Dario\ Cordero-Erausquin},
       DOI = {10.1007/978-3-540-71050-9},
       URL = {https://doi.org/10.1007/978-3-540-71050-9},
}

@article {amari1982differential,
    AUTHOR = {Amari, Shunichi},
     TITLE = {Differential geometry of curved exponential
              families---curvatures and information loss},
   JOURNAL = {Ann. Statist.},
  FJOURNAL = {The Annals of Statistics},
    VOLUME = {10},
      YEAR = {1982},
    NUMBER = {2},
     PAGES = {357--385},
      ISSN = {0090-5364,2168-8966},
   MRCLASS = {62E99 (62B10)},
MRREVIEWER = {A.\ N.\ Philippou},
       URL =
              {http://links.jstor.org/sici?sici=0090-5364(198206)10:2<357:DGOCEF>2.0.CO;2-6&origin=MSN},
}

@book {amari1985differential,
    AUTHOR = {Amari, Shunichi},
     TITLE = {Differential-geometrical methods in statistics},
    SERIES = {Lecture Notes in Statistics},
    VOLUME = {28},
 PUBLISHER = {Springer-Verlag, New York},
      YEAR = {1985},
     PAGES = {v+290},
      ISBN = {3-540-96056-2},
   MRCLASS = {62F12 (53B21 62F05)},
MRREVIEWER = {C.\ R.\ Rao},
       DOI = {10.1007/978-1-4612-5056-2},
       URL = {https://doi.org/10.1007/978-1-4612-5056-2},
}

@article{lauritzen1987statistical,
  title={Statistical manifolds},
  author={Lauritzen, Stefan L},
  journal={Differential geometry in statistical inference},
  volume={10},
  pages={163--216},
  year={1987},
  publisher={Institute of Mathematical Statistics Hayward}
}

@article {WW,
    AUTHOR = {Wei, Guofang and Wylie, Will},
     TITLE = {Comparison geometry for the {B}akry-{E}mery {R}icci tensor},
   JOURNAL = {J. Differential Geom.},
  FJOURNAL = {Journal of Differential Geometry},
    VOLUME = {83},
      YEAR = {2009},
    NUMBER = {2},
     PAGES = {377--405},
      ISSN = {0022-040X,1945-743X},
   MRCLASS = {53C23 (53C20)},
MRREVIEWER = {Joseph\ E.\ Borzellino},
       DOI = {10.4310/jdg/1261495336},
       URL = {https://doi.org/10.4310/jdg/1261495336},
}

@article {fujitani2025steklov,
    AUTHOR = {Fujitani, Yasuaki},
     TITLE = {A {S}teklov eigenvalue estimate for affine connections and its
              application to substatic triples},
   JOURNAL = {Anal. Math. Phys.},
  FJOURNAL = {Analysis and Mathematical Physics},
    VOLUME = {15},
      YEAR = {2025},
    NUMBER = {6},
     PAGES = {Paper No. 136, 16},
      ISSN = {1664-2368,1664-235X},
   MRCLASS = {58C40},
       DOI = {10.1007/s13324-025-01135-w},
       URL = {https://doi.org/10.1007/s13324-025-01135-w},
}

@article {FS,
    AUTHOR = {Fujitani, Yasuaki and Sakurai, Yohei},
     TITLE = {Geometric analysis on weighted manifolds under lower
              0-weighted {R}icci curvature bounds},
   JOURNAL = {Nonlinear Anal.},
  FJOURNAL = {Nonlinear Analysis. Theory, Methods \& Applications. An
              International Multidisciplinary Journal},
    VOLUME = {263},
      YEAR = {2026},
     PAGES = {Paper No. 113965, 15},
      ISSN = {0362-546X,1873-5215},
   MRCLASS = {53C21 (53C20)},
       DOI = {10.1016/j.na.2025.113965},
       URL = {https://doi.org/10.1016/j.na.2025.113965},
}

@article {CS,
    AUTHOR = {Choi, Hyeong In and Schoen, Richard},
     TITLE = {The space of minimal embeddings of a surface into a
              three-dimensional manifold of positive {R}icci curvature},
   JOURNAL = {Invent. Math.},
  FJOURNAL = {Inventiones Mathematicae},
    VOLUME = {81},
      YEAR = {1985},
    NUMBER = {3},
     PAGES = {387--394},
      ISSN = {0020-9910,1432-1297},
   MRCLASS = {58E12 (53C42 58D10)},
MRREVIEWER = {J.\ Eells},
       DOI = {10.1007/BF01388577},
       URL = {https://doi.org/10.1007/BF01388577},
}

@book {konrad,
    AUTHOR = {Schm\"udgen, Konrad},
     TITLE = {Unbounded self-adjoint operators on {H}ilbert space},
    SERIES = {Graduate Texts in Mathematics},
    VOLUME = {265},
 PUBLISHER = {Springer, Dordrecht},
      YEAR = {2012},
     PAGES = {xx+432},
      ISBN = {978-94-007-4752-4},
   MRCLASS = {47-01 (47B25 47E05)},
MRREVIEWER = {G.\ V.\ Rozenblum},
       DOI = {10.1007/978-94-007-4753-1},
       URL = {https://doi.org/10.1007/978-94-007-4753-1},
}
\end{document}